\documentclass[12pt]{article} 
\usepackage{amsfonts,amsmath,latexsym,amssymb,mathrsfs,amsthm}
\usepackage{slashbox}
\usepackage{caption}

\evensidemargin0cm
\oddsidemargin0cm
\textwidth16cm
\textheight22.8cm
\topmargin-1.7cm  



\let\OLDthebibliography\thebibliography
\renewcommand\thebibliography[1]{
  \OLDthebibliography{#1}
  \setlength{\parskip}{3pt}
  \setlength{\itemsep}{0pt plus 0.3ex}
}


%

\def\numberlikeadb{\global\def\theequation{\thesection.\arabic{equation}}}
\numberlikeadb
\newtheorem{theorem}{Theorem}[section]

\newtheorem{corollary}[theorem]{Corollary}

\newtheorem{remark}[theorem]{Remark}

\usepackage{lscape}
\usepackage{caption}
\usepackage{multirow}
\begin{document}

\title{Inequalities for integrals of the modified Struve function of the first kind}
\author{Robert E. Gaunt\footnote{School of Mathematics, The University of Manchester, Manchester M13 9PL, UK}}

\date{April 11, 2018} 
\maketitle

\vspace{-5mm}

\begin{abstract}Simple inequalities for some integrals involving the modified Struve function of the first kind $\mathbf{L}_{\nu}(x)$ are established.  In most cases, these inequalities have best possible constant.  We also deduce a tight double inequality, involving the modified Struve function $\mathbf{L}_{\nu}(x)$, for a generalized hypergeometric function.
\end{abstract}


\noindent{{\bf{Keywords:}}} Modified Struve function; inequality; integral

\noindent{{{\bf{AMS 2010 Subject Classification:}}} Primary 33C10; 26D15

\section{Introduction}\label{intro}

In the recent papers \cite{gaunt ineq1} and \cite{gaunt ineq3}, simple lower and upper bounds, involving the modified Bessel function of the first kind $I_\nu(x)$, were obtained for the integrals
\begin{equation}\label{intbes}\int_0^x \mathrm{e}^{-\gamma t} t^\nu I_\nu(t)\,\mathrm{d}t, \qquad \int_0^x \mathrm{e}^{-\gamma t} t^{\nu+1} I_\nu(t)\,\mathrm{d}t,
\end{equation}
where $x>0$, $0\leq\gamma<1$ and $\nu>-\frac{1}{2}$.  For $\gamma\not=0$ there does not exist simple closed form expressions for the integrals in (\ref{intbes}) The inequalities of \cite{gaunt ineq1,gaunt ineq3} were needed in the development of Stein's method \cite{stein,chen,np12} for variance-gamma approximation \cite{eichelsbacher, gaunt vg, gaunt vg2}, although as they are simple and surprisingly accurate the inequalities may also prove useful in other problems involving modified Bessel functions; see for example, \cite{baricz3} in which inequalities for modified Bessel functions of the first kind were used to obtain lower and upper bounds for integrals involving modified Bessel functions of the first kind.

In this note, we consider the natural problem of obtaining inequalities, involving the modified Struve function of the first kind, for the integrals
\begin{equation}\label{intstruve}\int_0^x \mathrm{e}^{-\gamma t} t^\nu \mathbf{L}_\nu(t)\,\mathrm{d}t, \qquad \int_0^x \mathrm{e}^{-\gamma t} t^{\nu+1} \mathbf{L}_\nu(t)\,\mathrm{d}t,
\end{equation}
where $x>0$, $0\leq\gamma<1$ and $\nu>-\frac{3}{2}$, and $\mathbf{L}_\nu(x)$ is the modified Struve function of the first kind defined, for $x\in\mathbb{R}$ and $\nu\in\mathbb{R}$, by
\begin{equation*}\mathbf{L}_\nu(x)=\sum_{k=0}^\infty \frac{\big(\frac{1}{2}x\big)^{\nu+2k+1}}{\Gamma(k+\frac{3}{2})\Gamma(k+\nu+\frac{3}{2})}.
\end{equation*}
The modified Struve function $\mathbf{L}_\nu(x)$ is closely related to the modified Bessel function $I_\nu(x)$, and either shares or has a close analogue to the properties of $I_\nu(x)$ that were exploited in derivations of the inqualities for the integrals in (\ref{intbes}) by \cite{gaunt ineq1,gaunt ineq3}.  The function $\mathbf{L}_\nu(x)$ is itself a widely used special function; see a standard reference, such as \cite{olver}, for its basic properties.  It arises in manyfold applications, including leakage inductance in transformer windings \cite{hw94}, perturbation approximations of lee waves in a stratified flow \cite{mh69}, scattering of plane waves by soft obstacles \cite{s84}; see \cite{bp13} for a list of further application areas. 



When $\gamma=0$ both integrals in (\ref{intstruve}) can be evaluated exactly.  Indeed, the second integral is equal to $x^{\nu+1}\mathbf{L}_{\nu+1}(x)$ (see \cite{olver}, formula 11.4.29).  The first integral can be evaluated because the modified Struve function $\mathbf{L}_{\nu}(x)$ can be represented as a generalized hypergeometric function.  To see this, recall that the generalized hypergeometric function (see \cite{olver} for this definition and further properties) is defined by
\begin{equation*}{}_pF_q\big(a_1,\ldots,a_p;b_1,\ldots,b_q;x\big)=\sum_{k=0}^\infty \frac{(a_1)_k\cdots(a_p)_k}{(b_1)_k\cdots(b_q)_k}\frac{x^k}{k!},
\end{equation*}
and the Pochhammer symbol is given by $(a)_0=1$ and $(a)_k=a(a+1)(a+2)\cdots(a+k-1)$, $k\geq1$.  Then, for $-\nu-\frac{3}{2}\notin\mathbb{N}$, we have the representation
\begin{equation*}\mathbf{L}_\nu(x)=\frac{x^{\nu+1}}{\sqrt{\pi}2^\nu\Gamma(\nu+\frac{3}{2})} {}_1F_2\bigg(1;\frac{3}{2},\nu+\frac{3}{2};\frac{x^2}{4}\bigg)
\end{equation*}
(see also \cite{bp13} for other representations in terms of the generalized hypergeometric function). A straightforward calculation then yields
\begin{equation}\label{besint6}\int_0^x t^\nu \mathbf{L}_\nu(t)\,\mathrm{d}t=\frac{x^{2\nu+2}}{\sqrt{\pi}2^{\nu+1}(\nu+1)\Gamma(\nu+\frac{3}{2})}{}_2F_3\bigg(1,\nu+1;\frac{3}{2},\nu+\frac{3}{2},\nu+2;\frac{x^2}{4}\bigg).
\end{equation}
When $\gamma\not=0$, there does, however, not exist a closed form formula for the integrals in (\ref{intstruve}).  Moreover, even when $\gamma=0$ the first integral is given in terms of the generalized hypergeometric function.  This provides the motivation for establishing simple bounds, involving the modified Struve function $\mathbf{L}_\nu(x)$, for these integrals.


The approach taken in this note to bound the integrals in (\ref{intstruve}) is similar to that used by \cite{gaunt ineq1,gaunt ineq3} to bound the related integrals involving the modfied Bessel function $I_\nu(x)$, and the inequalities obtained in this note are of a similar form to those obtained for the integrals involving $I_\nu(x)$.  As already noted, the reason for this similarity is because many of the properties of the modified Bessel function $I_\nu(x)$ that were exploited in the proofs of  \cite{gaunt ineq1,gaunt ineq3} are shared by the modified Struve function $\mathbf{L}_\nu(x)$, which we now list.  All these formulas can be found in \cite{olver}, except for the inequality which is given in \cite{bp14}.  Further inequalities for $\mathbf{L}_\nu(x)$ can be found in \cite{bp14} and \cite{bps17}, some of which improve results of \cite{jn98}.

For positive values of $x$ the function $\mathbf{L}_{\nu}(x)$ is positive for $\nu>-\frac{3}{2}$ . The function $\mathbf{L}_{\nu}(x)$ satisfies the recurrence relation and differentiation formula
\begin{align}\label{Iidentity}\mathbf{L}_{\nu -1} (x)- \mathbf{L}_{\nu +1} (x) &= \frac{2\nu}{x} \mathbf{L}_{\nu} (x)+\frac{\big(\frac{1}{2}x\big)^\nu}{\sqrt{\pi}\Gamma(\nu+\frac{3}{2})}, \\
\label{diffone}\frac{\mathrm{d}}{\mathrm{d}x} \big(x^{\nu} \mathbf{L}_{\nu} (x) \big) &= x^{\nu} \mathbf{L}_{\nu -1} (x).
\end{align}
The function $\mathbf{L}_\nu(x)$ has the following asymptotic properties:
\begin{align}\label{Itend0}\mathbf{L}_{\nu}(x)&\sim \frac{2}{\sqrt{\pi}\Gamma(\nu+\frac{3}{2})}\bigg(\frac{x}{2}\bigg)^{\nu+1}, \quad x \downarrow 0, \: \nu>-\tfrac{3}{2}, \\
\label{Itendinfinity}\mathbf{L}_{\nu}(x)&\sim \frac{\mathrm{e}^{x}}{\sqrt{2\pi x}}, \quad x \rightarrow\infty, \: \nu\in\mathbb{R}.
\end{align}
Let $x > 0$. Then 
\begin{equation}\label{Imon}\mathbf{L}_{\nu} (x) < \mathbf{L}_{\nu - 1} (x), \quad \nu \geq \tfrac{1}{2}.  
\end{equation} 

We end this introduction by noting that \cite{gaunt ineq1,gaunt ineq3} also derived lower and upper bounds for the integrals $\int_x^\infty \mathrm{e}^{\beta t} t^\nu K_\nu(t)\,\mathrm{d}t$ and $\int_x^\infty \mathrm{e}^{\beta t} t^{\nu+1} K_\nu(t)\,\mathrm{d}t$, where $x>0$, $\nu>-\frac{1}{2}$, $0\leq\beta<1$ and $K_\nu(x)$ is a modified Bessel function of the second kind.  Analogously to the problem studied in this note it is natural to ask for bounds for the integrals $\int_x^\infty \mathrm{e}^{\beta t} t^\nu \mathbf{M}_\nu(t)\,\mathrm{d}t$ and $\int_x^\infty \mathrm{e}^{\beta t} t^{\nu+1} \mathbf{M}_\nu(t)\,\mathrm{d}t$, where $\mathbf{M}_\nu(x)=\mathbf{L}_\nu(x)-I_\nu(x)$ is the modified Struve function of the second kind.  However, the inequalities of \cite{gaunt ineq1, gaunt ineq3} for integrals involving $K_\nu(x)$ do not have a natural analogue for $\mathbf{M}_\nu(x)$.

Unlike the function $\mathbf{L}_\nu(x)$, for general values of $\nu$, some of the crucial properties of $K_\nu(x)$ that were exploited in the proofs of \cite{gaunt ineq1,gaunt ineq3} do not have an analogue for $\mathbf{M}_\nu(x)$.  Indeed, the function $\mathbf{M}_\nu(x)$ does not have the exponential decay as $x\rightarrow\infty$ that $K_\nu(x)$ has, and is in fact unbounded when $\nu>1$ (see formula 11.6.2 of \cite{olver}).  Moreover, despite possessing some interesting monotonicity properties (see \cite{bp142}), $\mathbf{M}_\nu(x)$ does not have an analogue of the inequality $K_{\nu}(x)>K_{\nu-1}(x)$, $\nu>\frac{1}{2}$, (see \cite{ifantis}) which was heavily used in the proofs of \cite{gaunt ineq1,gaunt ineq3}.  


\section{Inequalities for integrals of the modified Struve function of the first kind}\label{sec2}

In the following theorem, we establish inequalities for the integrals in (\ref{intstruve}), which are natural analogues of the inequalities for the integrals in (\ref{intbes}) that are given in Theorem 2.1 of \cite{gaunt ineq1} and Theorem 2.3 of \cite{gaunt ineq3}. 

\begin{theorem} \label{tiger} Let $n>-1$ and $0\leq \gamma <1$. Then, for all $x>0$,
\begin{align}\label{besi11}\int_0^x \mathrm{e}^{-\gamma t}t^\nu \mathbf{L}_{\nu+n}(t)\,\mathrm{d}t&>\mathrm{e}^{-\gamma x}x^\nu \mathbf{L}_{\nu+n+1}(x), \quad\nu>-\tfrac{1}{2}(n+2),  \\
\label{100fcp}\int_0^xt^{\nu}\mathbf{L}_{\nu}(t)\,\mathrm{d}t &<x^{\nu}\mathbf{L}_{\nu}(x), \quad \nu\geq \tfrac{1}{2}, 
\end{align}
\begin{align}
\label{besi22}\int_0^x t^\nu \mathbf{L}_{\nu+n}(t)\,\mathrm{d}t&<\frac{x^\nu}{2\nu+n+1}\bigg(2(\nu+n+1)\mathbf{L}_{\nu+n+1}(x)-(n+1)\mathbf{L}_{\nu+n+3}(x)\nonumber \\
& \quad -\frac{(n+1)x^{\nu+n+2}}{\sqrt{\pi}2^{\nu+n+1}(2\nu+n+2)\Gamma(\nu+n+\frac{5}{2})}\bigg), \quad\!\! \nu>-\tfrac{1}{2}(n+1), \\
\label{pron}\int_0^x\mathrm{e}^{-\gamma t} t^{\nu}\mathbf{L}_{\nu}(t)\,\mathrm{d}t &\leq \frac{\mathrm{e}^{-\gamma x}}{1-\gamma}\int_0^xt^{\nu}\mathbf{L}_{\nu}(t)\,\mathrm{d}t, \quad \nu \geq \tfrac{1}{2},  \\
\label{besi33}\int_0^x \mathrm{e}^{-\gamma t}t^{\nu}\mathbf{L}_\nu(t)\,\mathrm{d}t&<\frac{\mathrm{e}^{-\gamma x}x^\nu}{(2\nu+1)(1-\gamma)}\bigg(2(\nu+1)\mathbf{L}_{\nu+1}(x)-\mathbf{L}_{\nu+3}(x)\nonumber \\
&\quad\quad\quad\quad\quad\quad\quad\quad\quad-\frac{x^{\nu+2}}{\sqrt{\pi}2^{\nu+2}(\nu+1)\Gamma(\nu+\frac{5}{2})}\bigg), \quad \nu\geq\tfrac{1}{2}, \\
\label{besi44}\int_0^x \mathrm{e}^{-\gamma t}t^{\nu+1} \mathbf{L}_{\nu}(t)\,\mathrm{d}t&\geq\mathrm{e}^{-\gamma x}x^{\nu+1} \mathbf{L}_{\nu+1}(x), \quad \nu>-\tfrac{3}{2}, \\
\label{besi55}\int_0^x \mathrm{e}^{-\gamma t}t^{\nu+1} \mathbf{L}_{\nu}(t)\,\mathrm{d}t&<\frac{1}{1-\gamma}\mathrm{e}^{-\gamma x}x^{\nu+1} \mathbf{L}_{\nu+1}(x), \quad\nu>-\tfrac{1}{2}.
\end{align}
We have equality in (\ref{pron}) and (\ref{besi44}) if and only if $\gamma=0$.  The constants in the bounds (\ref{100fcp})--(\ref{besi55}) cannot be improved, and the constant in (\ref{besi11}) is also best possible if $\gamma=0$.  Inequalities (\ref{besi11}) and (\ref{besi44}) hold for all $\gamma>0$.
\end{theorem}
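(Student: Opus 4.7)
The plan is to prove each inequality by exhibiting an explicit auxiliary function whose derivative can be simplified via (\ref{diffone}), (\ref{Iidentity}), or (\ref{Imon}), and whose value at $x=0$ is controlled via (\ref{Itend0}).

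Starting with (\ref{besi11}), I would set $F(x)=\int_0^x e^{-\gamma t}t^\nu \mathbf{L}_{\nu+n}(t)\,\mathrm{d}t-e^{-\gamma x}x^\nu \mathbf{L}_{\nu+n+1}(x)$ and compute $F'(x)$; writing $x^\nu \mathbf{L}_{\nu+n+1}(x)=x^{-(n+1)}\cdot x^{\nu+n+1}\mathbf{L}_{\nu+n+1}(x)$ and applying (\ref{diffone}) to the second factor collapses $F'(x)$ to $\gamma e^{-\gamma x}x^\nu \mathbf{L}_{\nu+n+1}(x)+(n+1)e^{-\gamma x}x^{\nu-1}\mathbf{L}_{\nu+n+1}(x)$, strictly positive whenever $n>-1$ and $\gamma\geq 0$ (so the argument never uses $\gamma<1$, which explains the assertion that (\ref{besi11}) extends to all $\gamma>0$), and (\ref{Itend0}) gives $F(0)=0$ precisely when $\nu>-(n+2)/2$. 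The same strategy handles (\ref{besi44}) (the derivative of the analogous difference collapses to $\gamma e^{-\gamma x}x^{\nu+1}\mathbf{L}_{\nu+1}(x)\geq 0$, zero iff $\gamma=0$, and again for all $\gamma\geq 0$) and (\ref{besi55}) (the derivative reduces to $\frac{\gamma e^{-\gamma x}x^{\nu+1}}{1-\gamma}(\mathbf{L}_\nu(x)-\mathbf{L}_{\nu+1}(x))\geq 0$ by (\ref{Imon}) once $\nu\geq-\tfrac{1}{2}$). Inequality (\ref{100fcp}) falls out from (\ref{diffone}) integrated as $x^\nu \mathbf{L}_\nu(x)=\int_0^x t^\nu \mathbf{L}_{\nu-1}(t)\,\mathrm{d}t$, bounded using (\ref{Imon}). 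Inequality (\ref{pron}) then follows by verifying that $\frac{\mathrm{d}}{\mathrm{d}x}(e^{-\gamma x}\int_0^x t^\nu\mathbf{L}_\nu(t)\,\mathrm{d}t)\geq(1-\gamma)e^{-\gamma x}x^\nu\mathbf{L}_\nu(x)$ via (\ref{100fcp}) and integrating, and (\ref{besi33}) is simply the composition of (\ref{pron}) with the $n=0$ case of (\ref{besi22}).

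The principal technical step is (\ref{besi22}). I would first integrate by parts with $u=t^{-(n+1)}$ and $\mathrm{d}v=t^{\nu+n+1}\mathbf{L}_{\nu+n}(t)\,\mathrm{d}t$, using (\ref{diffone}) to obtain $v=t^{\nu+n+1}\mathbf{L}_{\nu+n+1}(t)$; the hypothesis $\nu>-(n+1)/2$ together with (\ref{Itend0}) kills the boundary term at $0$ and gives the identity
\[
\int_0^x t^\nu \mathbf{L}_{\nu+n}(t)\,\mathrm{d}t = x^\nu \mathbf{L}_{\nu+n+1}(x)+(n+1)\int_0^x t^{\nu-1}\mathbf{L}_{\nu+n+1}(t)\,\mathrm{d}t.
\]
Next I would substitute (\ref{Iidentity}) (shifted by $\nu\mapsto\nu+n+1$) to rewrite $t^{\nu-1}\mathbf{L}_{\nu+n+1}(t)$ as a multiple of $t^\nu \mathbf{L}_{\nu+n}(t)-t^\nu \mathbf{L}_{\nu+n+2}(t)-t^{2\nu+n+1}/(\sqrt{\pi}\,2^{\nu+n+1}\Gamma(\nu+n+\tfrac{5}{2}))$, integrate term by term, and solve the resulting linear equation for $\int_0^x t^\nu \mathbf{L}_{\nu+n}(t)\,\mathrm{d}t$; the factor $2\nu+n+1$ in the denominator of (\ref{besi22}) appears naturally from this rearrangement and is positive by hypothesis. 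Finally I would invoke the $\gamma=0$, $n\mapsto n+2$ case of (\ref{besi11}) to bound $\int_0^x t^\nu \mathbf{L}_{\nu+n+2}(t)\,\mathrm{d}t$ from below by $x^\nu \mathbf{L}_{\nu+n+3}(x)$, converting the identity into the announced upper bound. The principal obstacle is the algebraic bookkeeping, particularly verifying that the direction of the inequality is preserved after multiplying through by the correct sign.

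The sharpness assertions follow from asymptotics at infinity: by (\ref{Itendinfinity}), $\mathbf{L}_\mu(x)\sim e^x/\sqrt{2\pi x}$ for every $\mu$, and a Watson/Laplace-type estimate yields $\int_0^x e^{-\gamma t}t^\alpha \mathbf{L}_\mu(t)\,\mathrm{d}t\sim \frac{x^{\alpha-1/2}e^{(1-\gamma)x}}{\sqrt{2\pi}\,(1-\gamma)}$ for $\gamma<1$. Substituting into (\ref{100fcp})--(\ref{besi55}) shows the ratio of the two sides of each inequality tends to $1$ as $x\to\infty$, and the same computation at $\gamma=0$ handles (\ref{besi11}); for (\ref{besi22}), inspection of the asymptotics of the three terms on the right confirms that the coefficients in front of $\mathbf{L}_{\nu+n+1}(x)$, $\mathbf{L}_{\nu+n+3}(x)$ and the power term cannot be individually improved. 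The equality cases for (\ref{pron}) and (\ref{besi44}) at $\gamma=0$ are immediate from the proofs above (the relevant derivative vanishes identically), and the closed form $\int_0^x t^{\nu+1}\mathbf{L}_\nu(t)\,\mathrm{d}t=x^{\nu+1}\mathbf{L}_{\nu+1}(x)$ recorded in the introduction confirms there are no other equality cases.
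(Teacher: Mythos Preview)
Your proofs of the inequalities themselves are correct and follow essentially the same strategy as the paper: for (\ref{besi11}), (\ref{besi44}) and (\ref{besi55}) the paper also differentiates (or equivalently bounds a monotone factor inside the integral) and uses (\ref{diffone}) and (\ref{Imon}); for (\ref{besi22}) the paper likewise derives the identity via (\ref{diffone}) and (\ref{Iidentity}) and then feeds in the $\gamma=0$ case of (\ref{besi11}); and (\ref{100fcp}), (\ref{pron}), (\ref{besi33}) are handled exactly as you describe.

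There is, however, a genuine gap in your sharpness argument for (\ref{besi44}). You claim that for every inequality in the range (\ref{100fcp})--(\ref{besi55}) the ratio of the two sides tends to $1$ as $x\to\infty$. That is false for (\ref{besi44}) when $\gamma>0$: by your own Laplace-type asymptotic, $\int_0^x \mathrm{e}^{-\gamma t}t^{\nu+1}\mathbf{L}_\nu(t)\,\mathrm{d}t\sim \frac{1}{1-\gamma}\cdot\frac{x^{\nu+1/2}\mathrm{e}^{(1-\gamma)x}}{\sqrt{2\pi}}$, while $\mathrm{e}^{-\gamma x}x^{\nu+1}\mathbf{L}_{\nu+1}(x)\sim \frac{x^{\nu+1/2}\mathrm{e}^{(1-\gamma)x}}{\sqrt{2\pi}}$, so the ratio tends to $\frac{1}{1-\gamma}>1$. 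This is precisely why the paper treats (\ref{besi44}) separately and instead looks at $x\downarrow 0$: using (\ref{Itend0}) one checks that both sides behave like $\frac{x^{2\nu+3}}{\sqrt{\pi}\,2^{\nu+1}\Gamma(\nu+\frac{5}{2})}$, so any constant $M>1$ in front of the right-hand side would violate the inequality for small $x$. Your sentence about the ``closed form $\int_0^x t^{\nu+1}\mathbf{L}_\nu(t)\,\mathrm{d}t=x^{\nu+1}\mathbf{L}_{\nu+1}(x)$'' addresses only the equality case at $\gamma=0$, not the optimality of the constant for $\gamma>0$; you need the $x\downarrow 0$ comparison to close this.
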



\begin{proof}We first establish inequalities (\ref{besi11})--(\ref{besi55}) and then prove that the constants in inequalities (\ref{100fcp})--(\ref{besi55}), and (\ref{besi11}) when $\gamma=0$, cannot be improved.

(i)  Let us first prove inequality (\ref{besi11}).  The condition $\nu>-\tfrac{1}{2}(n+2)$ ensures that the integral exists. As $\gamma>0$ and $n>-1$, on using the differentiation formula (\ref{diffone}) we have 
\begin{align*}\int_0^x\mathrm{e}^{-\gamma t}t^{\nu}\mathbf{L}_{\nu+n}(t)\,\mathrm{d}t &=\int_0^x\mathrm{e}^{-\gamma t}\frac{1}{t^{n+1}}t^{\nu+n+1}\mathbf{L}_{\nu+n}(t)\,\mathrm{d}t\\
& >\frac{\mathrm{e}^{-\gamma x}}{x^{n+1}}\int_0^xt^{\nu+n+1}\mathbf{L}_{\nu+n}(t)\,\mathrm{d}t =\mathrm{e}^{-\gamma x}x^{\nu}\mathbf{L}_{\nu+n+1}(x),
\end{align*}
since by (\ref{Itend0}) we have $\lim_{x\downarrow 0}x^{\nu+n+1}\mathbf{L}_{\nu+n+1}(x)=0$ if $n>-1$ and $\nu>-\tfrac{1}{2}(n+2)$.

(ii) Using inequality (\ref{Imon}) and then applying (\ref{diffone}) we obtain
\[\int_0^xt^{\nu}\mathbf{L}_{\nu}(t)\,\mathrm{d}t <\int_0^xt^{\nu}\mathbf{L}_{\nu-1}(t)\,\mathrm{d}t=x^{\nu}\mathbf{L}_{\nu}(x).\]

(iii) From the differentiation formula (\ref{diffone}) and the relation (\ref{Iidentity}) we get that
\begin{align*} \frac{\mathrm{d}}{\mathrm{d}t} \big(t^{\nu} \mathbf{L}_{\nu +n+1} (t)\big) &= \frac{\mathrm{d}}{\mathrm{d}t}(t^{-(n+1)} \cdot t^{\nu +n+1} \mathbf{L}_{\nu+n+1} (t)) \\
& = t^{\nu} \mathbf{L}_{\nu +n} (t) -(n+1)t^{\nu -1} \mathbf{L}_{\nu +n+1}(t) \\
& = t^{\nu} \mathbf{L}_{\nu +n} (t) - \frac{n+1}{2(\nu +n+1)} t^{\nu} \mathbf{L}_{\nu+n} (t) + \frac{n+1}{2(\nu +n+1)} t^{\nu} \mathbf{L}_{\nu +n+2} (t) \\
&\quad +(n+1)t^{\nu-1}\cdot\frac{t}{2(\nu+n+1)}\frac{\big(\frac{1}{2}t\big)^{\nu+n+1}}{\sqrt{\pi}\Gamma(\nu+n+\frac{5}{2})} 
\end{align*}
\begin{align*}
& = \frac{2\nu +n+1}{2(\nu +n+1)} t^{\nu} \mathbf{L}_{\nu +n} (t) + \frac{n+1}{2(\nu +n+1)} t^{\nu} \mathbf{L}_{\nu +n+2} (t)\\
&\quad+\frac{n+1}{\nu+n+1}\frac{t^{2\nu+n+1}}{\sqrt{\pi}2^{\nu+n+2}\Gamma(\nu+n+\frac{5}{2})}. 
\end{align*}
Integrating both sides over $(0,x)$, applying the fundamental theorem of calculus and rearranging gives
\begin{align*}\int_0^x t^{\nu} \mathbf{L}_{\nu +n} (t)\,\mathrm{d}t &= \frac{2(\nu +n+1)}{2\nu +n+1} x^{\nu} \mathbf{L}_{\nu +n+1} (x) - \frac{n+1}{2\nu +n+1} \int_0^x t^{\nu} \mathbf{L}_{\nu +n +2} (t)\,\mathrm{d}t \\
&\quad-\frac{2(n+1)}{2\nu+n+1}\int_0^x\frac{t^{2\nu+n+1}}{\sqrt{\pi}2^{\nu+n+2}\Gamma(\nu+n+\frac{5}{2})}\,\mathrm{d}t \\
&= \frac{2(\nu +n+1)}{2\nu +n+1} x^{\nu} \mathbf{L}_{\nu +n+1} (x) - \frac{n+1}{2\nu +n+1} \int_0^x t^{\nu} \mathbf{L}_{\nu +n +2} (t)\,\mathrm{d}t \\
&\quad-\frac{n+1}{2\nu+n+1}\frac{x^{2\nu+n+2}}{\sqrt{\pi}2^{\nu+n+1}(2\nu+n+2)\Gamma(\nu+n+\frac{5}{2})}.
\end{align*}
Applying inequality (\ref{besi11}) with $\gamma=0$ to the integral on the right hand-side of the above expression then yields (\ref{besi22}), as required.

(iv) Let $\nu\geq\frac{1}{2}$.   Then integration by parts and inequality (\ref{100fcp}) gives
\begin{align*} \int_0^x \mathrm{e}^{-\gamma t} t^\nu\mathbf{L}_\nu(t) \,\mathrm{d}t &= \mathrm{e}^{-\gamma x}\int_0^x t^\nu \mathbf{L}_\nu(t)\,\mathrm{d}t + \gamma \int_0^x \mathrm{e}^{-\gamma t}\bigg(\int_0^t  u^\nu\mathbf{L}_\nu(u) \,\mathrm{d}u\bigg) \,\mathrm{d}t \\ &< \mathrm{e}^{-\gamma x}\int_0^x t^\nu \mathbf{L}_\nu(t)\,\mathrm{d}t + \gamma \int_0^x \mathrm{e}^{-\gamma t} t^\nu\mathbf{L}_\nu(t)  \,\mathrm{d}t, 
\end{align*}
whence on rearranging we obtain (\ref{pron}).

(v) Combine parts (iii) and (iv).

(vi) Let $\nu>-\frac{3}{2}$ so that the integral exists. Since $\gamma>0$,
\begin{equation*}\int_0^x \mathrm{e}^{-\gamma t}t^{\nu+1}\mathbf{L}_\nu(t)\,\mathrm{d}t>\mathrm{e}^{-\gamma x}\int_0^x t^{\nu+1}\mathbf{L}_\nu(t)\,\mathrm{d}t=\mathrm{e}^{-\gamma x}x^{\nu+1}\mathbf{L}_{\nu+1}(x).
\end{equation*}

(vii) Consider the function
\begin{equation*}u(x)=\frac{1}{1-\gamma}\mathrm{e}^{-\gamma x}x^{\nu+1}\mathbf{L}_{\nu+1}(x)-\int_0^x\mathrm{e}^{-\gamma t}t^{\nu+1}\mathbf{L}_\nu(t)\,\mathrm{d}t.
\end{equation*}
In order to prove the result, we argue that that $u(x)>0$ for all $x>0$.  Using the differentiation formula (\ref{diffone}) we have that
\begin{align*}u'(x)&=\frac{1}{1-\gamma}\mathrm{e}^{-\gamma x}x^{\nu+1}\big(\mathbf{L}_\nu(x)-\gamma \mathbf{L}_{\nu+1}(x)\big)-\mathrm{e}^{-\gamma x}x^{\nu+1}\mathbf{L}_\nu(x) \\
&=\frac{1}{1-\gamma}\mathrm{e}^{-\gamma x}x^{\nu+1}\big(\mathbf{L}_\nu(x)-\mathbf{L}_{\nu+1}(x)\big)>0,
\end{align*}
where we used (\ref{Imon}) to obtain the inequality.  Also, from (\ref{Itend0}), as $x\downarrow0$,
\begin{align*}u(x)&\sim \frac{1}{1-\gamma}\frac{x^{2\nu+3}}{\sqrt{\pi}2^{\nu+1}\Gamma(\nu+\frac{5}{2})}-\int_0^x \frac{t^{2\nu+2}}{\sqrt{\pi}2^\nu\Gamma(\nu+\frac{3}{2})}\,\mathrm{d}t\\
&=\frac{1}{1-\gamma}\frac{x^{2\nu+3}}{\sqrt{\pi}2^{\nu+1}\Gamma(\nu+\frac{5}{2})}-\frac{x^{2\nu+3}}{\sqrt{\pi}2^\nu(2\nu+3)\Gamma(\nu+\frac{3}{2})} \\
& =\frac{1}{1-\gamma}\frac{x^{2\nu+3}}{\sqrt{\pi}2^{\nu+1}\Gamma(\nu+\frac{5}{2})}-\frac{x^{2\nu+3}}{\sqrt{\pi}2^{\nu+1}\Gamma(\nu+\frac{5}{2})}=\frac{\gamma}{1-\gamma}\frac{x^{2\nu+3}}{\sqrt{\pi}2^{\nu+1}\Gamma(\nu+\frac{5}{2})}>0.
\end{align*}
Thus, we conclude that $u(x)>0$ for all $x>0$, as required.

(viii) We now prove that the constants in inequalities (\ref{100fcp})--(\ref{besi55}) cannot be improved, and that the constant in (\ref{besi11}) is best possible if $\gamma=0$.  We begin by noting that a straightforward asymptotic analysis using the asymptotic formula (\ref{Itendinfinity}) gives that, for $0\leq\gamma<1$ and $\nu>-\frac{1}{2}(n+2)$, 
\begin{equation}\label{intiinf} \int_0^x \mathrm{e}^{-\gamma t}t^\nu \mathbf{L}_{\nu+n}(t)\,\mathrm{d}t\sim \frac{1}{\sqrt{2\pi}(1-\gamma)}x^{\nu+n-\frac{1}{2}}\mathrm{e}^{(1-\gamma)x}, \quad x\rightarrow\infty.
\end{equation}
Let us now prove that the constant in (\ref{100fcp}) is best possible.  From (\ref{intiinf}) and (\ref{Itendinfinity}), we have on the one hand, as $x\rightarrow\infty$,
\begin{equation}\label{eqeq1}\int_0^x t^\nu \mathbf{L}_{\nu}(t)\,\mathrm{d}t\sim  \frac{1}{\sqrt{2\pi}}x^{\nu-\frac{1}{2}}\mathrm{e}^x,
\end{equation}
and on the other,
\begin{equation}\label{eqeq2}x^\nu\mathbf{L}_\nu(x)\sim  \frac{1}{\sqrt{2\pi}}x^{\nu-\frac{1}{2}}\mathrm{e}^x.
\end{equation}
The equivalence between (\ref{eqeq1}) and (\ref{eqeq2}) proves the claim that the constant is best possible.  One can also use this approach to prove that the constants in inequalities (\ref{besi22}), (\ref{pron}), (\ref{besi33}) and (\ref{besi55}), and (\ref{besi11}) when $\gamma=0$, cannot be improved.  

It now remains to prove that the constant in (\ref{besi44}) cannot be improved.  Before doing so, we note that an alternative argument can be used to prove that constant in (\ref{besi22}) is best possible.  From (\ref{Itend0}), we have on the one hand, as $x\downarrow0$,
\begin{equation}\label{form1}\int_0^x t^\nu \mathbf{L}_{\nu+n}(t)\,\mathrm{d}t\sim\int_0^x \frac{t^{2\nu+n+1}}{\sqrt{\pi}2^{\nu+n}\Gamma(\nu+n+\frac{3}{2})}\,\mathrm{d}t=\frac{x^{2\nu+n+2}}{2^{\nu+n}(2\nu+n+2)\Gamma(\nu+n+\frac{3}{2})},
\end{equation}
and on the other,
\begin{align}\label{form2}&\frac{x^\nu}{2\nu+n+1}\bigg(2(\nu+n+1)\mathbf{L}_{\nu+n+1}(x)-(n+1)\mathbf{L}_{\nu+n+3}(x) \nonumber \\
&\quad\quad-\frac{(n+1)x^{\nu+n+2}}{\sqrt{\pi}2^{\nu+n+1}(2\nu+n+2)\Gamma(\nu+n+\frac{5}{2})}\bigg) \nonumber \\
&\quad\sim\frac{x^\nu}{2\nu+n+1}\bigg(\frac{2(\nu+n+1)x^{\nu+n+2}}{\sqrt{\pi}2^{\nu+n+1}\Gamma(\nu+n+\frac{5}{2})}-\frac{(n+1)x^{\nu+n+2}}{\sqrt{\pi}2^{\nu+n+1}(2\nu+n+2)\Gamma(\nu+n+\frac{5}{2})}\bigg) \nonumber \\
&\quad=\frac{(\nu+n+\frac{3}{2})x^{2\nu+n+2}}{\sqrt{\pi}2^{\nu+n}(2\nu+n+2)\Gamma(\nu+n+\frac{5}{2})} =\frac{x^{2\nu+n+2}}{2^{\nu+n}(2\nu+n+2)\Gamma(\nu+n+\frac{3}{2})},
\end{align}
which proves the claim.

Finally, we prove that the constant in the bound (\ref{besi44}) cannot be improved.  Let $M>0$ and define
\begin{equation*}u_M(x)=M\mathrm{e}^{-\gamma x}x^{\nu+1}\mathbf{L}_{\nu+1}(x)-\int_0^x\mathrm{e}^{-\gamma t}t^{\nu+1}\mathbf{L}_\nu(t)\,\mathrm{d}t.
\end{equation*}
From a similar argument to the one used in part (v), we have that, as $x\downarrow0$,
\begin{equation*}u_M(x)\sim (M-1)\frac{x^{2\nu+3}}{\sqrt{\pi}2^{\nu+1}\Gamma(\nu+\frac{5}{2})}.
\end{equation*}
Thus, if $M>1$ then $u_M(x)>0$ in a small positive neighbourhood of the origin, from which we conclude that the constant ($M=1$) in (\ref{besi44}) is best possible.
\end{proof}




We end by noting that we can combine the inequalities of Theorem \ref{tiger} and the integral formula (\ref{besint6}) to obtain lower and upper bounds for a generalized hypergeometric function.  


\begin{corollary}\label{struvebessel}Let $\nu>-\frac{1}{2}$. Then, for all $x>0$,
\begin{align*}&\mathbf{L}_{\nu+1}(x)<\frac{x^{\nu+2}}{\sqrt{\pi}2^{\nu+1}(\nu+1)\Gamma(\nu+\frac{3}{2})}{}_2F_3\bigg(1,\nu+1;\frac{3}{2},\nu+\frac{3}{2},\nu+2;\frac{x^2}{4}\bigg) \nonumber \\
&\quad\quad\quad\quad<\mathbf{L}_{\nu+1}(x)\bigg\{1+\frac{1}{2\nu+1}\bigg(1-\frac{\mathbf{L}_{\nu+3}(x)}{\mathbf{L}_{\nu+1}(x)}\bigg)\bigg\}-\frac{x^{\nu+2}}{\sqrt{\pi}2^{\nu+2}(2\nu+1)(\nu+1)\Gamma(\nu+\frac{5}{2})}.
\end{align*}
\end{corollary}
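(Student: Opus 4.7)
The plan is to recognise that the middle quantity in the corollary is, up to dividing by $x^\nu$, precisely the integral $\int_0^x t^\nu \mathbf{L}_\nu(t)\,\mathrm{d}t$ evaluated via formula (\ref{besint6}); then the corollary should drop out by bracketing this integral between inequalities (\ref{besi11}) and (\ref{besi22}) of Theorem \ref{tiger}, both taken with $n=0$, and in the former also with $\gamma=0$.

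First, I would apply (\ref{besi11}) with $\gamma=0$ and $n=0$, which is valid for $\nu>-1$, to obtain the lower bound
\begin{equation*}
\int_0^x t^\nu \mathbf{L}_\nu(t)\,\mathrm{d}t > x^\nu \mathbf{L}_{\nu+1}(x).
\end{equation*}
Next, I would apply (\ref{besi22}) with $n=0$, which requires $\nu>-\tfrac{1}{2}$ (explaining the hypothesis on $\nu$ in the corollary), to obtain
\begin{equation*}
\int_0^x t^\nu \mathbf{L}_\nu(t)\,\mathrm{d}t < \frac{x^\nu}{2\nu+1}\biggl(2(\nu+1)\mathbf{L}_{\nu+1}(x)-\mathbf{L}_{\nu+3}(x)-\frac{x^{\nu+2}}{\sqrt{\pi}\,2^{\nu+1}(2\nu+2)\Gamma(\nu+\tfrac{5}{2})}\biggr).
\end{equation*}
A short rearrangement, using $\tfrac{2(\nu+1)}{2\nu+1}=1+\tfrac{1}{2\nu+1}$ and factoring $\mathbf{L}_{\nu+1}(x)$ out of the first two Struve terms, rewrites the right-hand side as
\begin{equation*}
x^\nu\mathbf{L}_{\nu+1}(x)\Bigl\{1+\tfrac{1}{2\nu+1}\bigl(1-\tfrac{\mathbf{L}_{\nu+3}(x)}{\mathbf{L}_{\nu+1}(x)}\bigr)\Bigr\}-\frac{x^{2\nu+2}}{\sqrt{\pi}\,2^{\nu+2}(2\nu+1)(\nu+1)\Gamma(\nu+\tfrac{5}{2})}.
\end{equation*}

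Finally, I would substitute the exact evaluation (\ref{besint6}) of $\int_0^x t^\nu \mathbf{L}_\nu(t)\,\mathrm{d}t$ in the middle of this two-sided inequality, and divide everything by $x^\nu$; the power $x^{2\nu+2}$ appearing in (\ref{besint6}) and in the last displayed term both drop to $x^{\nu+2}$, producing exactly the stated bounds. There is no real obstacle here—the argument is a direct specialisation of Theorem \ref{tiger} combined with the hypergeometric representation (\ref{besint6})—so the only thing to verify carefully is the bookkeeping of the constants, in particular that the coefficient $\tfrac{n+1}{(2\nu+n+1)\cdot 2^{\nu+n+1}(2\nu+n+2)\Gamma(\nu+n+\tfrac{5}{2})}$ from (\ref{besi22}) specialises at $n=0$ to $\tfrac{1}{2^{\nu+2}(2\nu+1)(\nu+1)\Gamma(\nu+\tfrac{5}{2})}$, matching the trailing term in the corollary.
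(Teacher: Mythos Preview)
Your proposal is correct and follows exactly the same approach as the paper: combine the integral formula (\ref{besint6}) with inequalities (\ref{besi11}) and (\ref{besi22}) of Theorem~\ref{tiger} at $\gamma=n=0$, then divide through by $x^\nu$. The paper's proof is a one-line reference to these ingredients, and your additional bookkeeping of the constants is accurate.
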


\begin{proof}Combine the integral formula (\ref{besint6}) and inequalities (\ref{besi11}) and (\ref{besi22}) (with $\gamma=n=0$) of Theorem \ref{tiger}. 
\end{proof}

\begin{remark}We know from Theorem \ref{tiger} that the constants in the double inequality in Corollary \ref{struvebessel} are best possible.  Also, the double inequality is clearly tight in the limit $\nu\rightarrow\infty$ and part (viii) of the proof of Theorem \ref{tiger} tells us that the inequality is tight as $x\rightarrow\infty$.  

To gain further insight into the approximation, we used Mathematica to carry out some numerical results.  Denote by $L_\nu(x)$ and $U_\nu(x)$ the lower and upper bounds in the double inequality and denote by $F_\nu(x)$ the expression involving the generalized hypergeometric function that is bounded by these quantities.  The relative error in approximating $F_\nu(x)$ by $L_\nu(x)$ and $U_\nu(x)$ are given in Tables \ref{table1} and \ref{table2}.  For a given $x$, we observe the relative error in approximating $F_\nu(x)$ by either $L_\nu(x)$ or $U_\nu(x)$ decreases as $\nu$ increases.  We also notice that, for a given $\nu$, the relative error in approximating $F_\nu(x)$ by $L_\nu(x)$ decreases as $x$ increases.  However, from Table \ref{table2} we see that, for a given $\nu$, as $x$ increases the relative error in approximating $F_\nu(x)$ by $U_\nu(x)$ initially increases before decreasing.  This is because, for $\nu>-\frac{1}{2}$, $\lim_{x\downarrow0}\frac{U_\nu(x)}{F_\nu(x)}=1$, and so the relative error in approximating $F_\nu(x)$ by $U_\nu(x)$ is 0 in the limit $x\downarrow0$.  The limit $\lim_{x\downarrow0}\frac{U_\nu(x)}{F_\nu(x)}=1$ follows from combining the formula $F_\nu(x)=x^{-\nu}\int_0^x t^\nu \mathbf{L}_\nu(t)\,\mathrm{d}t$ and the limiting forms (\ref{form1}) and (\ref{form2}) (with $n=0$).

\begin{table}[h]
\begin{center}
\caption{\footnotesize{Relative error in approximating $F_\nu(x)$ by $L_\nu(x)$.}}
\label{table1}
{\scriptsize
\begin{tabular}{|c|rrrrrrr|}
\hline
 \backslashbox{$\nu$}{$x$}      &    0.5 &    5 &    10 &    15 &    25 &    50 & 100   \\
 \hline
$-0.25$ & $0.3975$& $0.2347$ & $0.1114$ & $0.0709$ & $0.0414$ & $0.0203$ & 0.0101  \\
0 & $0.3315$& $0.2099$ & $0.1071$ & $0.0695$ & $0.0409$ & $0.0202$ & 0.0101  \\
2.5 & $0.1251$& $0.1073$ & $0.0773$ & $0.0570$ & $0.0366$ & $0.0192$ & 0.0098  \\
5 & $0.0769$& $0.0715$ & $0.0591$ & $0.0475$ &  $0.0329$  &  0.0182 & 0.0095 \\
7.5 & $0.0555$& $0.0533$ & $0.0472$ & $0.0402$ & $0.0296$ & $0.0173$ & 0.0093  \\ 
10 & $0.0435$& $0.0423$ & 0.0390  & 0.0346    &  0.0268    &  0.0164 & 0.0091 \\  
  \hline
\end{tabular}}
\end{center}
\end{table}
\begin{table}[h]
\begin{center}
\caption{\footnotesize{Relative error in approximating $F_\nu(x)$ by $U_\nu(x)$.}}
\label{table2}
{\scriptsize
\begin{tabular}{|c|rrrrrrr|}
\hline
 \backslashbox{$\nu$}{$x$}      &    0.5 &    5 &    10 &    15 &    25 &    50 & 100   \\
 \hline
$-0.25$ & $0.0087$& $0.4204$ & $0.4288$ & $0.3267$ & $0.2137$ & $0.1134$ & 0.0584  \\
0 & $0.0046$& $0.1781$ & $0.1956$ & $0.1543$ & $0.1034$ & $0.0558$ & 0.0289  \\
2.5 & $0.0001$& $0.0074$ & $0.0142$ & $0.0148$ & $0.0125$ & $0.0080$ & 0.0045 \\
5 & $0.0000$& $0.0015$ & $0.0038$ & $0.0049$ &  $0.0050$  &  0.0037 & 0.0023 \\ 
7.5 & $0.0000$& $0.0005$ & $0.0014$ & $0.0021$ &  $0.0026$  &  0.0022 & 0.0014 \\
10 & $0.0000$& $0.0002$ & 0.0006  & 0.0011    &  0.0015     &  0.0014 & 0.0010 \\  
  \hline
\end{tabular}}
\end{center}
\end{table}

\end{remark}

\subsection*{Acknowledgements}
The author is supported by a Dame Kathleen Ollerenshaw Research Fellowship.  

\footnotesize

\end{document}